\theoremstyle{plain}
  \newtheorem{theorem}{Theorem}
  \newtheorem{lemma}{Lemma}
  \newtheorem{corollary}{Corollary}
\theoremstyle{definition}
  \newtheorem{definition}{Definition}
\theoremstyle{remark}
  \newtheorem{remark}{Remark}
\newcounter{algorithm}
\newcommand{\bsa}{{\boldsymbol{a}}}
\newcommand{\bsc}{{\boldsymbol{c}}}
\newcommand{\bsx}{{\boldsymbol{x}}}
\newcommand{\bsgamma}{{\boldsymbol{\gamma}}}
\newcommand{\bsxi}{{\boldsymbol{\xi}}}
\newcommand{\rd}{{\mathrm{d}}}
\newcommand{\F}{{\mathbb{F}}} 
\newcommand{\N}{{\mathbb{N}}} 
\newcommand{\RR}{{\mathbb{R}}} 
\DeclareSymbolFont{bbold}{U}{bbold}{m}{n}
\DeclareSymbolFontAlphabet{\mathbbold}{bbold}
\newcommand{\calO}{{\mathcal{O}}}
\newcommand{\calP}{{\mathcal{P}}}
\newcommand{\fraku}{{\mathfrak{u}}}
\newcommand{\setu}{{\mathfrak{u}}}
\newcommand{\setv}{{\mathfrak{v}}}
\newcommand{\akCT}{\widetilde{C}}
\newcommand{\AKabs}[1]{\left\vert#1\right\vert}
\title{Reduced digital nets}
\author{V. Anupindi, P. Kritzer}
\date{}
\begin{document}

\maketitle              

\begin{abstract}

In the recent papers ``The fast reduced QMC matrix-vector product'' (J. Comput. Appl. Math. 440, 115642, 2024) and ``Column reduced digital nets'' (submitted), it was proposed to use QMC rules based on reduced digital nets which provide a speed-up in the computation of QMC vector-matrix products that may occur in practical applications. In this paper, we provide upper bounds on the quality parameter of row reduced and column-row reduced digital nets, which are helpful for the error analysis of using reduced point sets as integration nodes in a QMC rule. We also give remarks on further aspects and comparisons of row reduced, column reduced, and column-row reduced digital net.

\end{abstract}

\section{Introduction}\label{AKsec:intro}

\subsection{The problem setting}
In various fields like statistics, finance, and uncertainty quantification, one needs to numerically compute integrals of the form
\begin{equation}\label{AKeq:integral}
 \int_D f(\bsx^\top A) \rd \mu(\bsx), 
\end{equation}
where $A$ is a real $s\times \tau$ matrix,  $D\subseteq\RR^s$, and $\mu$ is a suitable measure, by quasi-Monte Carlo (QMC) rules,  
\begin{equation}\label{AKeq:QMC_approx}
Q_N (f):=\frac{1}{N}\, \sum_{k=0}^{N-1} f(\bsx_k^\top A).
\end{equation}
Here, the $\bsx_k =(x_k^{(1)},\ldots,x_k^{(s)})^\top$, $k\in\{0,1,\ldots,N-1\}$, are column vectors corresponding to the points used in the QMC rule. For the sake of simplicity, we assume that $D=[0,1]^s$, and choose $\mu$ as the Lebesgue measure in the following.

We are interested in the cases where the vector-matrix product $\bsx_k^\top A$ is a significant factor in the computation of \eqref{AKeq:QMC_approx}. In special circumstances, one can modify $A$ such that it allows a fast vector-matrix multiplication, see \cite{DN97,GKNSS11}. However, in this paper, we do not modify the matrix $A$, but we focus on obtaining special quadrature points $\bsx_0, \dots, \bsx_{N-1}$ that speed-up the computation of the vector-matrix product $\bsx_k^\top A$ for $k\in\{0,1,\ldots,N-1\} $. We define the $N \times s$-matrix
\begin{equation}\label{AKeq:def_mat_X}
    X  = \begin{pmatrix}
    \bsx_0^\top \\
    \vdots \\
    \bsx_{N-1}^\top \\
\end{pmatrix}, 
\end{equation}
i.e., the $k$-th row is the $k$-th point $\bsx_k$ of the quadrature rule. Computing the vector-matrix products $\bsx_k^\top A$ 
for $k\in\{0,1,\ldots,N-1\}$
is equivalent to computing the matrix-matrix product $XA$, which requires $\calO (N \, s \, \tau)$ operations.
In \cite{DKLS15}, the authors showed that by using QMC point sets that are derived from (polynomial) lattice rules or Korobov's $p$-sets, one can obtain a speed-up in the matrix-matrix multiplication $XA$ by reordering $X$ to be of circulant structure and using the fast Fourier transformation (FFT). In this case, the cost of evaluating $Q_N(f)$ in \eqref{AKeq:QMC_approx} can be reduced to $\calO( \tau \, N \, \log N)$ operations provided that $\log N \ll s$.

In this paper, we take a different route and focus on quadrature points obtained from \emph{reduced digital nets.} Digital nets, as we know, are efficient and among the most commonly used QMC node sets. To obtain \emph{reduced} digital nets, we set a certain number of rows, columns, or both rows and columns of the generating matrices $C_j^{(m)}$ to zero. The main motivation for doing this is to obtain a certain repetitive structure in the components of the points $\bsx_k$ which can be used to obtain fast computation of the products $\bsx_k ^\top A$. 

The idea of using reduced digital nets was proposed in \cite{DEHKL23}, where the authors looked at \emph{row reduced} digital nets, that is, digital nets where one sets certain rows of the generating matrices to zero. In \cite{AK24}, we extended the study to column reduced digital nets.

While studying such reduced digital nets, we have four main aspects to consider. The first two pertain to the theoretical aspects, particularly the quality parameter (the so-called $t$-value) of the reduced net and the error estimate for using reduced nets in quadrature rules. The practical aspects include a fast algorithm for computing the matrix-matrix product using points from a reduced net and implementing the algorithm to obtain some numerical results on the computation time. 

In \cite{DEHKL23}, the authors provided a fast matrix-matrix product algorithm using row reduced digital nets and also provided error estimates assuming that the $t$-value of the row reduced digital net is known. However, an estimate for the $t$-value of the row reduced digital net was still missing in \cite{DEHKL23}. In \cite{AK24}, the authors of the present paper focused on column reduced digital nets, where we gave an estimate for the $t$-value, an error estimate using this bound, and also provided an algorithm for matrix-matrix multiplication using column reduced digital nets. Furthermore, with numerical experiments, we compared the computation time of row reduced with column reduced digital nets and the standard matrix-matrix multiplication.

In this paper, we will revisit the row reduced digital nets as proposed in \cite{DEHKL23}. We estimate the quality parameter of these nets and provide an error bound using our result in \cref{AKlem:t_value_red_proj} below. For an algorithm to compute the matrix-matrix product using row reduced digital nets, we refer to \cite[Algorithm 4]{DEHKL23}. We also comment on a combination of column and row reduced digital nets. 

In the following, we use $\N$ to denote the set of natural numbers starting from $1$ and $\N_0$ to denote the set of whole numbers, which are all the non-negative integers, i.e., $\N_0 = \{0,1,2,\dots \}$. 

\subsection{Digital nets and sequences}\label{AKsec:digital}
In this section, we recall the definitions and establish notation for digital nets and sequences. 

Given a prime $b$, let $\F_b$ be a finite field with $b$ elements, where the elements of $\F_b$ can be identified with the set $\{0,1,\dots,b-1\}$. An \textit{elementary interval} in base $b$ and dimension $s$ is a half-open interval of the form $\prod_{j=1}^{s} [a_jb^{-d_j}, (a_j + 1)b^{-d_j} )$, where the $a_j,d_j$ are non-negative integers with $0 \leq a_j < b^{d_j}$ for $1 \leq j \leq s$.

\begin{definition} 
For a given dimension $s \geq 1$ and non-negative integers $t,m$ with $0 \leq t \leq m$, a \emph{$(t,m,s)$-net} in base $b$ is a point set $\mathcal{P} \subset [0,1)^s$ consisting of $b^m$ points such that any elementary interval in base $b$ with volume $b^{t-m}$ contains exactly $b^t$ points of $\mathcal{P}$.

A sequence $(\bsx_{0},\bsx_1,\dots)$ of points in $[0,1)^s$ is called a \emph{$(t,s)$-sequence} in base $b$ if, for all integers $m\ge t$ and $k\geq 0$, the point set consisting of the points $\bsx_{kb^m},\dots,\bsx_{kb^m + b^m - 1}$ forms a $(t,m,s)$-net in base $b$. 
\end{definition}

For a $(t,m,s)$-net or a $(t,s)$-sequence, the parameter $t$, also known as the \textit{quality parameter} of a net or sequence, is crucial since it is an indicator of how uniformly distributed the points are. In particular, a low $t$-value is desirable since it implies better equidistribution properties. 

A $(t,m,s)$-net is called \textit{strict}, if it does not fulfill the requirements of a $(t-1,m,s)$-net (for $t\ge 1$), and analogously for $(t,s)$-sequences. In general, any $(t,m,s)$-net is also a $(t+1,m,s)$-net for $t < m$.

To construct concrete examples of good nets in arbitrary dimension, one frequently uses the \emph{digital method} introduced by Niederreiter in \cite{N92a}. 

\begin{definition}\label{AKdef:rho_m}
    A \emph{digital $(t,m,s)$-net over $\F_b$} is a $(t,m,s)$-net $\mathcal{P} = \{\bsx_0,\dots,\bsx_{b^{m}-1} \}$ where the points are constructed as follows. Let $C_1^{(m)},\dots,C_s^{(m)}$ in $\F_b^{m \times m}$ be matrices over $\F_b$. To generate the $k$-th point, $0\le k\le b^m-1$, in $\mathcal{P}$, we use the $b$-adic expansion $k = \sum_{i=0}^{m-1} k_i b^i$ with digits $k_i \in \{0,\dots,b-1\}$ which we denote by $\overrightarrow{k} = (k_0,\dots,k_{m-1})^{\top}$. The $j$-th coordinate $x_{k,j}$ of $\bsx_k = (x_{k,1},\dots,x_{k,s})$ is obtained by computing
    \[
    \overrightarrow{x}_{k,j} := C_j^{(m)} \,\overrightarrow{k},
    \]
    over $\F_b$ and then setting 
    \[
   x_{k,j}:=  \overrightarrow{x}_{k,j} \cdot (b^{-1},b^{-2},\ldots,b^{-m}).
    \]
    Similarly, a \emph{digital} $(t,s)$-sequence $\mathcal{S}$ over $\F_b$ 
    is generated by infinite matrices $C_1,\dots,C_s$, where 
    \begin{equation}\label{AKeq:def_Ci}
    C_j = (c_{i,r}^{(j)})_{i,r \in \N} \in \F_b^{\N \times \N}. 
    \end{equation}
    To generate the $k$-th point, $k\ge 0$, in $\mathcal{S}$, we use the $b$-adic expansion $k = \sum_{i=0}^{\infty} k_i b^i$ with digits $k_i \in \{0,\dots,b-1\}$ which we denote by $\overrightarrow{k} = (k_0,k_1,\dots)^{\top}$. The 
    $j$-th coordinate $x_{k,j}$ of $\bsx_k = (x_{k,1},\dots,x_{k,s})$ is obtained by computing
    \[
    \overrightarrow{x}_{k,j} :=  C_j^{(m)} \,\overrightarrow{k},
    \]
    over $\F_b$ and then setting 
    \[
   x_{k,j}:=  \overrightarrow{x}_{k,j} \cdot (b^{-1},b^{-2},\ldots).
    \]
\end{definition}

The quality parameter of a  digital $(t,m,s)$-net or a $(t,s)$-sequence solely depends on its generating matrices, in particular, it is linked to certain linear independence properties of the rows of the generating matrices.

\begin{definition}
    For any integers $1 \leq j \leq s$ and $m \geq 1$, let 
$C_{1}^{(m)}, C_2^{(m)},\ldots, C_s^{(m)}$ be $m \times m$ matrices over $\F_b$. Let $\bsc_{i}^{(j)}$ denote the $i$-th row of the $j$-th matrix.
Then the \emph{linear independence parameter} $\rho_m 
(C_{1}^{(m)}, C_2^{(m)},\ldots, C_s^{(m)})$ is defined as the largest integer $d$ such that for any choice of $d_1,\dots,d_s \in \N_0$, with $d_1 +\dots+d_s = d$, we have that the system of row vectors
\[
    \{ \bsc_{i}^{(j)} : 1 \leq i \leq d_j, 1 \leq j \leq s  \}
\]
is linearly independent over $\F_b$.
\end{definition}
It is known (see, e.g., \cite{N92,DP10}) that the generating matrices $C_{1}^{(m)}, C_2^{(m)},\ldots, C_s^{(m)}$ of a digital $(t,m,s)$-net over $\F_b$ satisfy 
\begin{equation} \label{AKeq:rho_t_relation}
    \rho_m (C_{1}^{(m)}, C_2^{(m)},\ldots, C_s^{(m)}) \ge m-t,
\end{equation}
where we have equality if the net is a strict $(t,m,s)$-net. Similarly, for the generating matrices $C_1,\ldots,C_s$ of a digital $(t,s)$-sequence over $\F_b$ we must have
$\rho_m(C_1^{(m)},\dots,C_s^{(m)}) \ge m-t$ for all $m\ge \max\{t,1\}$, where 
$C_j^{(m)}$ denotes the left upper $m\times m$ submatrix of $C_j$ for $j\in \{1,\ldots,s\}$.
Hence, for digital nets and sequences, their quality can be assessed by checking linear independence conditions on the rows of the generating matrices. 

Note that from any digital $(t,s)$-sequence over $\F_b$ with generating matrices $C_1,\ldots,C_s$, we can, for $m\ge t$, derive a digital $(t,m,s)$-net over $\F_b$, simply by considering the point set generated by the left upper $m\times m$ submatrices $C_1^{(m)},\ldots,C_s^{(m)}$ of $C_1,\ldots,C_s$. This is equivalent to considering the first $b^m$ points of the $(t,s)$-sequence. 

For more details on digital nets and sequences, we refer to \cite{DP10,NW15}.

\section{Row reduced digital nets}

In \cite{DEHKL23}, the authors introduced row reduced digital nets to obtain a speed-up in the matrix-matrix multiplication described in the introduction. They provide an algorithm which exploits a certain structure in the generation of row reduced digital nets and they also give error bounds. However, obtaining estimates for the quality parameter of row reduced digital nets is left as an open problem in \cite{DEHKL23}. 

In this section we provide estimates for the quality parameter of row reduced digital nets and we also use these estimates for error analysis.

Let us consider a digital $(t,m,s)$-net generated by the matrices $C_1^{(m)},\dots,C_s^{(m)}$.
 
Let $w_1,w_2,\ldots,w_s\in\N_0$ with $0=w_1 \le \cdots  \le w_s$. We call these numbers the \textit{reduction indices} for the 
generating matrices $C_j^{(m)} = (c_{i,r}^{(j)})$, $i,r\in \{1,2,\ldots,m\}$. We derive the corresponding row reduced matrices $\akCT_1^{(m)},\ldots,\akCT_s^{(m)}$, with $\akCT_j^{(m)}=(\widetilde{c}_{i,r}^{(j)})$, $i,r\in \{1,2,\ldots,m\}$, for $1\le j\le s$,
where
\begin{equation}\label{AKeq:def_Ci_red}
    \widetilde{c}_{i,r}^{(j)} = \begin{cases}
        c_{i,r}^{(j)} \quad &\text{ if } i \in \{ 1,\dots, m - \min{(m, w_j)}\},\\
        0 &\text{ if } i \in \{ m - \min{(m, w_j)} + 1,\dots,m \}.
    \end{cases}
\end{equation}

That is, the first $m - \min{(m,w_j)}$ rows of $\akCT_j^{(m)}$ are the same as the rows of the matrix $C_j^{(m)}$, and we set the last $\min{(m,w_j)}$ rows to zero, i.e, if $w_j < m $,
\[ 
\akCT_j^{(m)} = \begin{pmatrix}
c_{1,1}^{(j)}  & c_{1,2}^{(j)}  & \dots &  c_{1,m}^{(j)} & \\
\vdots & \vdots & \ddots &   \vdots   &  \\
c_{(m-w_{j}),1}^{(j)}     & c_{(m-w_{j}),2}^{(j)}     &    \dots    & c_{(m-w_{j}),m}^{(j)} &  \\
0 & 0 & \dots & 0 & \\
\vdots & \vdots & \ddots &  \vdots  &   \\ 
0 & 0 & \dots & 0 & \\
\end{pmatrix}.
\]

We are interested in estimating the quality parameter of the digital net generated by the $\akCT_j^{(m)}$, and state the following result.
\begin{theorem} \label{AKth:main_result_row_red}
    Let $\calP$ be a digital $(t,m,s)$-net over $\F_b$ with generating matrices $C_1^{(m)}, \dots ,C_s^{(m)}$.
Let $\akCT_1^{(m)},\ldots,\akCT_s^{(m)}$ be row reduced with respect to reduction indices $0= w_1 \leq \dots \leq w_s$, and let $\widetilde{t}$ 
be the minimal quality parameter of the net generated by the $\akCT_j^{(m)}$. Then,
\begin{equation}\label{AKeq:rho_row_CT}
    \max\{0,m - \max\{t,w_s\}\} \leq \rho_m \left(\akCT_1^{(m)},\dots,\akCT_s^{(m)}\right) \leq \max\{0,m - w_s \},
\end{equation}
and $\widetilde{t} \leq \min \{m, \max \{t,w_s \} \} $.
\end{theorem}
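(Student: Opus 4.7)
The plan is to prove the two inequalities on $\rho_m(\akCT_1^{(m)},\dots,\akCT_s^{(m)})$ separately and then deduce the bound on $\widetilde{t}$ from the lower bound via \eqref{AKeq:rho_t_relation}. Both inequalities hinge on the fact that, by \eqref{AKeq:def_Ci_red}, the row $\bsc_i^{(j)}$ of $\akCT_j^{(m)}$ equals the $i$-th row of $C_j^{(m)}$ when $i \le m - w_j$ and is the zero vector when $i > m - w_j$ (with the obvious reading when $w_j \ge m$).

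For the upper bound $\rho_m \le \max\{0, m-w_s\}$, I would exhibit an explicit dependence coming from the zero rows of $\akCT_s^{(m)}$. If $w_s < m$, take $d_1 = \cdots = d_{s-1} = 0$ and $d_s = m - w_s + 1$; the resulting system then contains $\bsc_{m-w_s+1}^{(s)}$, which is the zero vector, so the system is linearly dependent. If $w_s \ge m$, already $d_s = 1$ selects a zero row. In either case $\rho_m$ cannot exceed $\max\{0, m-w_s\}$.

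For the lower bound $\rho_m \ge \max\{0, m - \max\{t, w_s\}\}$, consider any $d_1,\dots,d_s \in \N_0$ with $d := d_1 + \cdots + d_s \le m - \max\{t, w_s\}$ (the nontrivial case). Because $w_j \le w_s$ for every $j$, each index satisfies $d_j \le d \le m - w_s \le m - w_j$, so every row $\bsc_i^{(j)}$ with $1 \le i \le d_j$ of $\akCT_j^{(m)}$ coincides with the corresponding row of the original $C_j^{(m)}$. Hence the system in question is identical to the one arising from the unreduced matrices. Since also $d \le m - t$, linear independence follows directly from the bound $\rho_m(C_1^{(m)},\dots,C_s^{(m)}) \ge m - t$ provided by \eqref{AKeq:rho_t_relation} for the original $(t,m,s)$-net.

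Finally, to convert the lower bound on $\rho_m$ into the stated bound on $\widetilde{t}$, I would apply \eqref{AKeq:rho_t_relation} to the reduced net: its minimal quality parameter equals $\max\{0, m - \rho_m(\akCT_1^{(m)},\dots,\akCT_s^{(m)})\}$ and is trivially at most $m$. Splitting into the two cases $\max\{t, w_s\} \ge m$ and $\max\{t, w_s\} < m$ — which merge under the $\min\{m,\cdot\}$ — yields $\widetilde{t} \le \min\{m, \max\{t, w_s\}\}$. There is no deep obstacle in the argument; the only thing requiring vigilance is the bookkeeping around the edge cases $w_j \ge m$, and this is exactly what the $\max\{0,\cdot\}$ and $\min\{m,\cdot\}$ caps are designed to absorb.
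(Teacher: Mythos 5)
Your proof is correct and follows essentially the same route as the paper's: the lower bound comes from observing that the first $d_j$ rows of $\akCT_j^{(m)}$ coincide with those of $C_j^{(m)}$ whenever $d_1+\cdots+d_s\le m-\max\{t,w_s\}$ (so the independence is inherited from the original net via \eqref{AKeq:rho_t_relation}), the upper bound comes from the zero rows of $\akCT_s^{(m)}$, and the bound on $\widetilde{t}$ follows from the lower bound on $\rho_m$. The only cosmetic differences are that you treat the cases $w_s\le t$ and $w_s>t$ uniformly where the paper splits them, and you exhibit an explicit zero row for the upper bound where the paper invokes the rank of $\akCT_s^{(m)}$.
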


We have a similar result for column reduced digital nets in \cite[Theorem 1]{AK24}, where we additionally require the $(t,m,s)$-net to be derived from a $(t,s)$-sequence. The main difference is that for the column reduced digital nets, we have $\widetilde{t} \leq \min \{m, t+w_s\}$.

\begin{proof}
If $w_s\ge m$, then we trivially have 
$\rho_m \left(\akCT_1^{(m)},\dots,\akCT_s^{(m)}\right)=0$, as $\akCT_s^{(m)}$ only contains zeros, and \eqref{AKeq:rho_row_CT} holds.

Therefore, we will assume for the rest of the proof that $w_s < m$. 
    If $w_s \leq t$, then 
    \[
    \rho_m \left(\akCT_1^{(m)},\dots,\akCT_s^{(m)}\right) = \rho_m \left(C_1^{(m)},\dots,C_s^{(m)}\right) \geq m-t,
    \]
    since we only consider the linear independence of at most the first $m-t$ rows of any matrix $\akCT_j^{(m)}$.
    
    For the lower bound when $w_s>t$, consider arbitrary integers $d_1,\dots,d_s \geq 0$ with $d_1 + \dots + d_s =  m- w_s$. Then, the collection of the first $d_j$ rows of $\akCT_j^{(m)}$, $1\le j \le s$,  which is the same as the collection of the first $d_j$ rows of $C_j$, $1\le j\le s$, is linearly independent. That is,
\[ 
    \bsc_{1}^{(1)},\dots,\bsc_{d_1}^{(1)}, \bsc_{1}^{(2)},\dots,\bsc_{d_2}^{(2)}, \dots,  \dots,\bsc_{1}^{(s)},\dots,\bsc_{d_s}^{(s)}
\]
are linearly independent. Therefore, 
\[
    \rho_m \left(\akCT_1^{(m)},\dots,\akCT_s^{(m)}\right)  \geq m-w_s.
\]
This concludes the proof of the lower bound. Regarding the upper bound, we use that $w_s < m$, and looking at the rank of $\akCT_s^{(m)}$, we obtain that 
\[
    \rho_m \left(\akCT_1^{(m)},\dots,\akCT_s^{(m)}\right) \leq m-w_s.
\]
Using \eqref{AKeq:rho_t_relation} and the lower bound in \eqref{AKeq:rho_row_CT}, we obtain the upper bound $\widetilde{t} \leq \min \{m, \max \{t,w_s \} \} $ for the minimal quality parameter of the row reduced net. 
\end{proof}

\subsection{The advantage of using reduced digital nets}
 We briefly recapitulate the advantage we gain by using row or column reduced digital nets. Let $N = b^m$ and $\widetilde{\calP} =  \{\bsx_0,\dots,\bsx_{N-1} \}$ be the row or column reduced digital net generated by row or column reduced matrices $\akCT_j^{(m)}$. Let 
 \[
  X  = \begin{pmatrix}
    \bsx_0^\top \\
    \vdots \\
    \bsx_{N-1}^\top \\
\end{pmatrix}  \in \RR^{N \times s}.
 \]
Let $\bsxi_j$ denote the $j$-th column of $X$, i.e., $X=[\bsxi_1,\bsxi_2,\ldots,\bsxi_s]$. Let $A=[\bsa_1,\ldots,\bsa_s ]^\top$, where 
$\bsa_j \in \RR^{1 \times \tau}$ is the $j$-th row of $A$. Since $\bsxi_j$ is computed using the $j$-th reduced generating matrix $\akCT_j^{(m)}$, we can use the repetitiveness that occurs in $\bsxi_j$ due to the zero rows or zero columns of $\akCT_j^{(m)}$. Therefore, to make use of the repetitiveness in $\bsxi_j$, we decompose the product $XA$ as a sum of matrices using the column row expansion as follows,
\begin{equation}\label{AKeq:MM_product}
 XA=[\bsxi_1,\bsxi_2,\ldots,\bsxi_s] \cdot [\bsa_1,\ldots,\bsa_s ]^\top=
 \bsxi_1 \bsa_1 + \bsxi_2 \bsa_2 + \cdots +\bsxi_s \bsa_s.
\end{equation}

In the following remarks, we briefly outline the repetitive pattern in each row reduced and column reduced point set and the computational advantage we gain by using \cite[Algorithm 4]{DEHKL23} and \cite[Algorithm 1]{AK24}, respectively.

\begin{remark}[Row reduced nets]
    Let $\akCT_j^{(m)}$ be the row reduced generating matrix with the last $w_j$ rows set to zero, for $j=1,\dots,s$. Let $s^* \leq s$ be the largest index such that $w_{s^*} <m$; we observe that, for $j>s^*$, $\akCT_j^{(m)}$ is the zero matrix. The $j$-th coordinate $x_{k,j}$ of $\bsx_k \in \widetilde{\calP}$ is obtained by computing 
    \[ x_{k,j} = (\akCT_j^{(m)} \overrightarrow{k}) \cdot (b^{-1},b^{-2},\ldots,b^{-m}), \quad k \in \{0,\dots,N-1\}. \]
Since the last $w_j$ rows of $\akCT_j^{(m)} \overrightarrow{k}$ are zero rows, we observe that all the values of $x_{k,j}$ 
for $k\in\{0,\ldots,N-1\}$ are in the set 
\begin{equation}\label{AKeq:rep_val_rowred}
    \{ 0, 1/b^{m- \min (w_j,m)},2/b^{m- \min (w_j,m)}, \dots, 1- 1/b^{m- \min (w_j,m)} \}.
\end{equation}
Algorithm 4 in \cite{DEHKL23} uses the observation in \eqref{AKeq:rep_val_rowred} to precompute 
the following row vectors for $j = 1,\dots,s$, namely
\[
\bsc_k := \frac{k}{b^{m-w_j}}\bsa_j, \quad k = 0,1,\dots,b^{m-w_j}-1.
\]
Then we can compute 
\[
\bsxi_j \bsa_j = \begin{pmatrix}
    x_{0,j} \\
    \vdots \\
    x_{N-1,j} \\
\end{pmatrix} \bsa_j = \begin{pmatrix}
    \bsc_{\lfloor x_{0,j}b^{m-w_j} \rfloor} \\
    \vdots \\
    \bsc_{\lfloor x_{N-1,j}b^{m-w_j} \rfloor} \\
\end{pmatrix} \in \RR^{N \times \tau},
\]
where we can use the precomputed values.

By this principle, the computational cost for generating points of a row reduced net can be lowered to $$\calO\big( \sum_{j=1}^{s^*} b^m m(m-w_j) \big)$$ as compared to $\calO(s b^m m^2)$ for a non-reduced net. Furthermore, the cost of the matrix-matrix multiplication can be reduced from $\calO(sb^m \tau)$ for the standard non-reduced case to $\calO(\sum_{j=1}^{s^*} b^{m-w_j} \tau)$ for the row reduced case using \cite[Algorithm 4]{DEHKL23}.

\end{remark}

\begin{remark}[Column reduced nets]
    Let $C_1^{(m)},\dots,C_s^{(m)}$ be generating matrices of a $(t,m,s)$-net derived from a digital $(t,s)$-sequence over $\F_b$ and let $\akCT_j^{(m)}$ be the corresponding column reduced generating matrix with the last $w_j$ columns set to zero, for $j=1,\dots,s$. Let $s^* \leq s$ be the largest index such that $w_{s^*} <m$;  we observe that, for $j>s^*$, $\akCT_j^{(m)}$ is the zero matrix. As stated before, the $j$-th coordinate $x_{k,j}$ of $\bsx_k \in \widetilde{\calP}$ 
    is given by 
    \[ x_{k,j} = (\akCT_j^{(m)} \overrightarrow{k}) \cdot (b^{-1},b^{-2},\ldots,b^{-m}), \quad k = 0,\dots,N-1. \]
Since the last $w_j$ columns of $\akCT_j^{(m)}$ are zeros, the last $w_j$ digits of the b-adic expansion of $k$ given by $\overrightarrow{k}$ are insignificant. Thus we observe that the $j$-th column vector of $X$ is given by
\[ 
\bsxi_j = (\underbrace{X_j,\ldots,X_j}_{b^{w_j}\ \rm times})^\top,
\]
where
\[
 X_j= \left(\left(\akCT_j^{(m)} \overrightarrow{0}  \right)\cdot(b^{-1},\ldots,b^{-m}),\ldots,
             \left(\akCT_j^{(m)} \overrightarrow{(b^{m-w_j}-1)}  \right)\cdot(b^{-1},\ldots,b^{-m})\right)^\top.
\]
Thus, to compute $XA$, we first compute $X_j \bsa_j \in \RR^{b^{m-w_j \times \tau}}$ instead of $\bsxi_j \bsa_j$, and we duplicate and enlarge it to match the dimension of $X_{j-1} \bsa_{j-1}$ and add this recursively to compute $XA$. For more details, see \cite[Algorithm 1]{AK24}. 

Using column reduced digital nets, we can obtain a speed-up in both the point generation step and the matrix-matrix multiplication. In \cite[Algorithm 1]{AK24}, the point generation step requires $\calO(\sum_{j=1}^{s^*} b^{m-w_j} m (m-w_j))$ operations and the matrix-matrix multiplication requires $\calO(\sum_{j=1}^{s^*} b^{m-w_j} \tau)$. Since the $j$-th column vector $\bsxi_j$ has a repetitive structure, we obtain a further reduction in the number of additions in \eqref{AKeq:MM_product}, which we do not have in the row reduced case. However, we only use nets derived from $(t,s)$-sequences for the column reduced digital nets, in order to still be able to give bounds on the quality parameter $\widetilde{t}$ of the reduced net. We do not need to impose this restriction for row reduced digital nets. 

\end{remark}

\subsection{Projections of digital nets}
The quality parameter $t$ determines the equidistribution properties of a $(t,m,s)$-net and hence also influences the error of a QMC rule that uses the points of the net. Due to the importance of the quality parameter, we also consider a finer measure, which is to look at the quality parameters of the projections of a $(t,m,s)$-net. 

Let $[s]:=\{1,\ldots,s\}$. For any  $\setu\neq\emptyset$, $\setu\subseteq [s]$, the projection of a $(t,m,s)$-net onto those components with indices in $\setu$ can be considered as a $(t_\setu,m,\AKabs{\setu})$-net for a suitable choice of $t_\setu \in \{1,\ldots,m\}$. Thus, we can view a $(t,m,s)$-net also as a $((t_\setu)_{\emptyset\neq\setu\subseteq [s]},m,s)$-net.

The notion of a $((t_\setu)_{\setu \subseteq [s]},s)$-sequence is defined analogously. Moreover, for $\setu\neq \emptyset$, we write 
$\overline{\setu}:=\max (\setu)$.
We have the following result for projections of row reduced nets.

\begin{corollary}\label{AKlem:t_value_red_proj}
Let $\calP$ be a digital $((t_\setu)_{\setu \subseteq [s]},m,s)$-net over $\F_b$ with generating matrices $C_1^{(m)}, \dots ,C_s^{(m)}$,
where we assume that $m\ge t$. 
Let $\akCT_1^{(m)},\ldots,\akCT_s^{(m)}$ be the row reduced generating matrices with respect to reduction indices $0= w_1 \leq \dots \leq w_s$ and let $(\widetilde{t}_\setu)_{\setu \subseteq [s]}$ 
be the minimal quality parameters of the projections of the net generated by the $\akCT_j^{(m)}$. Then, for every non-empty $\setu\subseteq [s]$,
\begin{equation*}
    \max\{0,m- \max \{w_{\overline{\setu}},t_{\setu}\}\} \leq \rho_m((\akCT_j^{(m)})_{j\in\setu}) \leq \max\{0,m - w_{\overline{\setu}}\},
\end{equation*}
and $ \widetilde{t}_{\setu} \le \min\{m,\max \{w_{\overline{\setu}} ,t_{\setu}\}\}$.

\end{corollary}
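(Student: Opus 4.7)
The plan is to reduce the corollary directly to Theorem \ref{AKth:main_result_row_red} by applying it componentwise to each projection. Fix a non-empty $\setu \subseteq [s]$. Since $\calP$ is a digital $((t_\setv)_{\setv \subseteq [s]},m,s)$-net, its projection onto the coordinates indexed by $\setu$ is, by definition, a digital $(t_\setu,m,\AKabs{\setu})$-net over $\F_b$ with generating matrices $(C_j^{(m)})_{j \in \setu}$. Performing the row reduction specified by the reduction indices $(w_j)_{j \in \setu}$ on this smaller family yields exactly the projected row-reduced matrices $(\akCT_j^{(m)})_{j \in \setu}$, because the row-reduction operation in \eqref{AKeq:def_Ci_red} is a coordinatewise operation that only depends on $w_j$ and $C_j^{(m)}$.

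Next I would handle the ordering subtlety. Theorem \ref{AKth:main_result_row_red} is stated under the hypothesis that the reduction indices are sorted non-decreasingly and that the largest one equals the last. For $\setu = \{j_1 < j_2 < \cdots < j_{|\setu|}\}$, the assumption $w_1 \leq w_2 \leq \cdots \leq w_s$ implies $w_{j_1} \leq w_{j_2} \leq \cdots \leq w_{j_{|\setu|}}$, and $\overline{\setu} = j_{|\setu|}$ gives $\max_{j \in \setu} w_j = w_{\overline{\setu}}$. Thus the hypotheses of Theorem \ref{AKth:main_result_row_red} are met for the projected family, with the role of $s$ played by $|\setu|$, the role of $t$ played by $t_\setu$, and the role of $w_s$ played by $w_{\overline{\setu}}$.

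Applying Theorem \ref{AKth:main_result_row_red} directly then yields
\[
\max\{0, m - \max\{t_\setu, w_{\overline{\setu}}\}\} \leq \rho_m\bigl((\akCT_j^{(m)})_{j \in \setu}\bigr) \leq \max\{0, m - w_{\overline{\setu}}\},
\]
together with $\widetilde{t}_\setu \leq \min\{m, \max\{t_\setu, w_{\overline{\setu}}\}\}$, which are exactly the asserted bounds.

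I do not expect any genuine obstacle here: the corollary is essentially a bookkeeping exercise that lifts the single-index statement of Theorem \ref{AKth:main_result_row_red} to arbitrary projections. The only subtle point to verify carefully is that the ordering $w_1 \leq \cdots \leq w_s$ transfers to the subfamily indexed by $\setu$ in such a way that $w_{\overline{\setu}}$ really plays the role of the last (largest) reduction index, which is immediate from the monotonicity assumption on the $w_j$.
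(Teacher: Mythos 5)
Your proposal is correct and matches the paper's (implicit) argument: the corollary is stated without a separate proof precisely because it follows by applying \cref{AKth:main_result_row_red} to the projected family $(C_j^{(m)})_{j\in\setu}$, which is what you do. The only cosmetic mismatch is that \cref{AKth:main_result_row_red} normalizes the first reduction index to $0$, whereas the subfamily $(w_j)_{j\in\setu}$ need not have $w_{j_1}=0$; since that normalization is never used in the proof of the theorem (only the monotonicity and the value of the largest index $w_{\overline{\setu}}$ matter), the application remains valid.
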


We have a corresponding result in \cite[Corollary 1]{AK24} for column reduced digital nets derived from $(t,s)$-sequences, where the quality parameter of the column reduced net is given by $ \widetilde{t}_{\setu} \le \min\{m,w_{\overline{\setu}} + t_{\setu}\}$.

\subsection{Error analysis}

The main motivation for this paper is to approximate integral \eqref{AKeq:integral} using QMC rule \eqref{AKeq:QMC_approx}. While we gain an advantage in the computational cost by using row-reduced digital nets as opposed to non-reduced nets, there is a certain loss in the quality parameter of reduced nets as formalized in \cref{AKth:main_result_row_red}.

In this section, we focus on the integration error obtained by using a QMC rule. The error analysis here is similar to the one in \cite{AK24}, where we gave an upper bound on the quality parameter of column reduced nets. In the present paper, we have given a corresponding result for row reduced digital nets in \cref{AKth:main_result_row_red} above. There is another error estimate using row reduced digital nets given in \cite[Theorem 4]{DEHKL23}, but there the authors do not have an estimate for the quality parameter of the reduced net, and therefore use a different method to obtain an error bound. Here, we use the result in \cref{AKlem:t_value_red_proj} for the error analysis.

We start by considering 
the \emph{weighted star discrepancy}, which is (via the well-known \emph{Koksma-Hlawka inequality} or its weighted version, see, e.g., \cite{DKP22,DP10,N92}), a measure of the worst-case quadrature error for a QMC rule with node set $Q$, with $b^m$ nodes, defined as
\begin{equation}
    D_{b^m,\bsgamma}^{*}(Q) := \sup_{x \in (0,1]^s} \max_{\emptyset \neq \fraku \subseteq [s]} \gamma_{\fraku} \AKabs{\Delta_{Q,\fraku}(\bsx)}, 
\end{equation}
where
\begin{equation}\label{AKdef:DeltaP}
    \Delta_{Q,\fraku}(\bsx) := \frac{ \# \{(y_1,\ldots,y_s)\in Q  \colon y_j < x_j, \, \forall j \in \fraku \}}{b^m}- \prod_{j\in \fraku} x_j.
\end{equation}
Here, the \textit{weights} $\gamma_{\fraku}$ are non-negative real numbers, which model the influence of the variables with indices in $\fraku$. The concept of weights was first introduced in the seminal paper \cite{SW98}  by Sloan and Wo\'{z}niakowski, and is nowadays a standard way of modeling high-dimensional problems in the context of QMC methods. Here, we assume that the weights are so-called (positive) \textit{product weights}. That is, we assume a non-increasing sequence $(\gamma_j)_{j\ge 1}$ of positive reals, and put 
$\gamma_{\fraku}:=\prod_{j\in\fraku} \gamma_j$, where $\gamma_{\emptyset}:=1$.
We refer to \cite{DP10} for further details regarding weights in the context of $(t,m,s)$-nets.

Using Lemma 1 in \cite{P18}, we have,
\begin{eqnarray*}
    D_{b^m,\bsgamma}^{*}(Q) &=&  \max_{\emptyset \neq \fraku \subseteq [s]} 
    \sup_{\bsx \in (0,1]^s} \gamma_{\fraku} \AKabs{\Delta_{Q,\fraku}(\bsx)}\\
    &=&  \max_{\emptyset \neq \fraku \subseteq [s]} \gamma_{\fraku}
    \sup_{\bsx \in (0,1]^s}  \AKabs{\Delta_{Q,\fraku}(\bsx)}.
\end{eqnarray*}

In \cite{FK13}, the authors provide discrepancy bounds for projections of $(t,m,s)$-nets. Let $\calP$ be a digital $((t_\setu)_{\setu \subseteq [s]},m,s)$-net over $\F_b$ with $m \times m $ generating matrices $C_1^{(m)}, \dots ,C_s^{(m)}$, 
where $m\ge t$. Let $\widetilde{\calP}$ be the corresponding row reduced digital net based on the reduction indices $0=w_1 \le w_2 \le \cdots \le w_s$, 
and let $(\widetilde{t}_\setu)_{\setu \subseteq [s]}$ be the minimal quality parameters of the projections of $\widetilde{\calP}$. Using a result from \cite{FK13}, which
gives a relation between the discrepancy and the quality paramater of $(t,m,s)$-nets, we obtain, analogously to what is done in \cite[Section 4]{AK24},
\begin{equation}\label{AKeq:disc_bound_global}
 D_{b^m,\bsgamma}^{*}(\widetilde{\calP}) \le  \max\left\{
 \max_{\substack{\emptyset \neq \fraku \subseteq [s]\\ \setu\not\subseteq [s^*]}} \gamma_{\fraku}
 ,\max_{\substack{\fraku \subseteq [s^*]\\ \AKabs{\setu}=1}} \gamma_{\setu}\,
 \frac{b^{\widetilde{t}_\setu}}{b^m}
 ,\max_{\substack{\fraku \subseteq [s^*]\\ \AKabs{\setu}\ge 2}} \gamma_{\setu}\,
 \frac{b^{\widetilde{t}_\setu}}{b^m}\sum_{v=0}^{\AKabs{\setu}-1}   a_{v,b}^{(\AKabs{\setu})} m^v 
 \right\},
\end{equation}
where
\begin{eqnarray*}
a_{v,b}^{(\AKabs{\setu})}&=&{\AKabs{\setu}-2 \choose v} \left(\frac{b+2}{2}\right)^{\AKabs{\setu}-2-v}\frac{(b-1)^{v}}{2^{v}v!} (a_{0,b}^{(2)}+ \AKabs{\setu}^2-4)\nonumber\\
&&+{\AKabs{\setu}-2 \choose v-1} \left(\frac{b+2}{2}\right)^{\AKabs{\setu}-1-v}\frac{(b-1)^{v-1}}{2^{v-1}v!} a_{1,b}^{(2)},
\end{eqnarray*}
for $0\le v\le \AKabs{\setu}-1$,
with
\[
a_{0,b}^{(2)}=\begin{cases}
                                     \frac{b+8}{4}&\mbox{if $b$ is even,}\\ \\
                                     \frac{b+4}{2}&\mbox{if $b$ is odd,}
                                     \end{cases}
\hspace{1cm} \mbox{and} \hspace{1cm} 
a_{1,b}^{(2)}=\begin{cases}
                                     \frac{b^2}{4(b+1)}&\mbox{if $b$ is even,}\\ \\
                                     \frac{b-1}{4}&\mbox{if $b$ is odd.}
                                     \end{cases}
\]

To analyze the first term in \eqref{AKeq:disc_bound_global}, we proceed as in \cite{DEHKL23}, that is, we use that $w_j\ge m$ if $j\in \setu\setminus [s^\ast]$, and obtain for $\setv = \setu \cap [s^\ast]$ that
\begin{equation}\label{AKeq:disc_term_1}
 \gamma_{\setu} \le \gamma_{\setv} \gamma_{\setu\setminus\setv} \frac{1}{b^m}\prod_{j\in\setu\setminus\setv} (1+b^{w_j})
 \le \frac{1}{b^m}  \prod_{j\in \setu} \gamma_j (1 + b^{w_j}). 
\end{equation}

For the second term in \eqref{AKeq:disc_bound_global}, note that we have 
$\setu=\{j\}$ for some $j\in [s^*]$, and hence using \cref{AKlem:t_value_red_proj}, we have
$\widetilde{t}_{\setu} \le \min\{m,\max\{ w_j, t_{\{j\}} \} \}$. Consequently,
\begin{equation}\label{AKeq:disc_term_2}
 \max_{\substack{\fraku \subseteq [s^*]\\ \AKabs{\setu}=1}} \gamma_{\setu}
 \frac{b^{\widetilde{t}_\setu}}{b^m} \le 
 \max_{j\in [s^*]} \gamma_j \frac{b^{\min\{m,\max\{ w_j, t_{\{j\}} \}\}}}{b^m}.
\end{equation}

For the third term in \eqref{AKeq:disc_bound_global}, we again use \cref{AKlem:t_value_red_proj}, and obtain 
\begin{equation}\label{AKeq:disc_term_3}
 \max_{\substack{\fraku \subseteq [s^*]\\ \AKabs{\setu}\ge 2}} \gamma_{\setu}\,
 \frac{b^{\widetilde{t}_\setu}}{b^m}\sum_{v=0}^{\AKabs{\setu}-1}   a_{v,b}^{(\AKabs{\setu})} m^v
 \le 
 \max_{\substack{\fraku \subseteq [s^*]\\ \AKabs{\setu}\ge 2}} \gamma_{\setu}\,
 \frac{b^{\min \{m, \max \{ w_{\overline{\setu}}, t_{\setu} \} \}}}{b^m}\sum_{v=0}^{\AKabs{\setu}-1}   a_{v,b}^{(\AKabs{\setu})} m^v.
\end{equation}
Using these estimates in \eqref{AKeq:disc_bound_global}, we obtain
\begin{equation}\label{AKeq:disc_bound_global_2}
\begin{aligned}
    D_{b^m,\bsgamma}^{*}(\widetilde{\calP}) \le \max  \Biggl\{
  \max_{\substack{\emptyset \neq \fraku \subseteq [s]\\ \setu\not\subseteq [s^*]}}
  \frac{1}{b^m}  \prod_{j\in \setu} \gamma_j (1 + b^{w_j}),
 \max_{j\in [s^*]} \gamma_j \frac{b^{\min \{ m, \max \{ w_j, t_{\{j\}} \} \} }}{b^m} &, \\  \max_{\substack{\fraku \subseteq [s^*]\\ \AKabs{\setu}\ge 2}} \gamma_{\setu}\,
 \frac{b^{\min \{m, \max \{ w_{\overline{\setu}},t_{\setu} \}\}}}{b^m}\sum_{v=0}^{\AKabs{\setu}-1} a_{v,b}^{(\AKabs{\setu})} m^v \Biggr\}. & \\
\end{aligned}
\end{equation}

The above estimate is similar to the error bound in \cite{AK24} for column reduced digital nets, since the estimate for the quality parameter of row reduced digital nets as given in \cref{AKlem:t_value_red_proj}, is similar to the corresponding estimate for column reduced digital nets. Analogously to \cite[Remark 5]{AK24}, the first two terms of \eqref{AKeq:disc_bound_global_2} can be made independent of the dimension $s$ by choosing reduction indices $w_j$ that increase sufficiently fast. The third term depends on the interplay between the weights $\gamma$ and the quality parameters $t_{\setu}$ of the projections of $\calP$, where for suitable choice of reduction indices $w_j$, small quality parameters for the projections and sufficiently fast decaying weights, we can obtain tighter error bounds. For more details, we refer to \cite[Remark 5]{AK24}. 

As mentioned before, in \cite{DEHKL23}, the authors also provide error bounds using row reduced digital nets where they assume that the quality parameter of the row reduced net $\widetilde{t}_\setu$ is known, which may frequently not be the case in practice. However, since we can estimate $\widetilde{t}_\setu$ using \cref{AKlem:t_value_red_proj}, our contribution is to provide explicit error bounds using the quality parameter $t_\setu$ of the original net and the reduction indices $w_j$.

\section{Combination of column and row reduced digital nets} \label{AKsubsec:combi_col_row}
In \cite{AK24} and \cite{DEHKL23}, the authors studied column reduced digital nets and row reduced digital nets, respectively. It is a natural question to ask what happens when we combine the two and look at column-row reduced digital nets. That is, for each generating matrix $C_j^{(m)}$, $1\le j\le s$, we set the last $w_j$ columns and the last $w_j$ rows to zero. Since we set some columns to zero, we impose the condition that the generating matrices for the $(t,m,s)$-nets have to be derived from $(t,s)$-sequences, as it was also done in \cite{AK24}.

Let $0=w_1 \le \cdots  \le w_s \in \N_0$ be the reduction indices for the generating matrices $C_1^{(m)},\ldots,C_s^{(m)}$, where the $C_j^{(m)} = (c_{i,r}^{(j)})$, $i,r\in \{1,2,\ldots,m\}$, are derived from a digital $(t,s)$-sequence over $\F_b$. We derive the corresponding column-row reduced matrices $\akCT_1^{(m)},\ldots,\akCT_s^{(m)}$, with $\akCT_j^{(m)}=(\widetilde{c}_{i,r}^{(j)})$, $i,r\in \{1,2,\ldots,m\}$, for $1\le j\le s$,
where
\begin{equation}\label{AKeq:def_Ci_cr_red}
    \widetilde{c}_{i,r}^{(j)} = \begin{cases}
        c_{i,r}^{(j)} \quad &\text{ if } i\mbox{ and }r \in \{ 1,\dots, m - \min{(m, w_j)}\},\\
        0 &\text{otherwise.}
    \end{cases}
\end{equation}

We observe that \cite[Theorem 1]{AK24} also holds for column-row reduced digital nets. Since the linear independence parameter $\rho_m(\akCT_1^{(m)}, \dots,\akCT_s^{(m)} )$ depends on systems of row vectors of the $\akCT_j^{(m)}$ being linearly independent (see \cref{AKdef:rho_m}), this measure is more sensitive to setting the last few columns to zero as compared to setting the last few rows to zero. In particular, since the generating matrices are derived from a $(t,s)$-sequence, we have that for the left upper $(m-w_s) \times (m-w_s)$ submatrices of $C_1,\dots,C_s$, denoted by $C_j^{(m-w_s)}$,
\[
\rho_{(m-w_s)}(C_1^{(m-w_s)},\dots,C_s^{(m-w_s)}) \geq m-w_s-t.
\]
This lower bound still holds for column-row reduced nets with the same argument as presented in \cite[Theorem 1]{AK24}.
 
We recall \cite[Theorem 1]{AK24} here with the statement modified for column-row reduced nets. 

\begin{lemma}
    Let $\calP$ be a digital $(t,m,s)$-net over $\F_b$ with generating matrices 
$C_1^{(m)}, \dots ,C_s^{(m)}$ derived from a digital $(t,s)$-sequence over $\F_b$, 
where we assume that $m\ge t$. 
Let $\akCT_1^{(m)},\ldots,\akCT_s^{(m)}$ be as defined in \eqref{AKeq:def_Ci_cr_red} with respect to reduction indices $0= w_1 \leq \dots \leq w_s$ and let $\widetilde{t}$ 
be the minimal quality parameter of the net generated by the $\akCT_j^{(m)}$. Then,
\begin{equation}\label{AKeq:claim_rho_CT}
    \max\{0,m-w_s-t\} \leq \rho_m \left(\akCT_1^{(m)},\dots,\akCT_s^{(m)}\right) \leq \max\{0,m - w_s\},
\end{equation}
and $ \widetilde{t} \le \min\{m,w_s +t\}$.

\medskip

Furthermore, if $\calP$ is a strict digital $(t,m,s)$-net, it is true that
\begin{equation}\label{AKeq:claim_rho_CT_strict}
    \rho_m \left(\akCT_1^{(m)},\dots,\akCT_s^{(m)}\right) \leq \max\{0,m - \max\{t,w_s\}\}.
\end{equation}
\end{lemma}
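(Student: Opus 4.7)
The plan is to (1) establish the bounds in \eqref{AKeq:claim_rho_CT}, (2) deduce the $\widetilde{t}$ estimate, and (3) handle the strict-case refinement \eqref{AKeq:claim_rho_CT_strict} separately. The structural observation driving everything will be that, by \eqref{AKeq:def_Ci_cr_red}, each $\akCT_j^{(m)}$ is the block-diagonal matrix with $C_j^{(m-w_j)}$ sitting in its top-left $(m-w_j)\times(m-w_j)$ corner and zeros elsewhere, so it inherits independence properties from lower levels of the underlying $(t,s)$-sequence.

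For the upper bound in \eqref{AKeq:claim_rho_CT}, I would use that the last $w_s$ rows of $\akCT_s^{(m)}$ are identically zero; choosing $d_s=m-w_s+1$ and $d_j=0$ for $j<s$ then forces the selected family to contain a zero row, hence $\rho_m\le m-w_s$ when $w_s<m$, while the case $w_s\ge m$ is trivial. For the lower bound I would follow the template of \cite[Theorem 1]{AK24}: given $d_1,\dots,d_s\ge 0$ summing to $m-w_s-t$ (assumed positive), the bound $d_j\le m-w_s\le m-w_j$ gives the first $d_j$ rows of $\akCT_j^{(m)}$ the form $(c_{i,1}^{(j)},\dots,c_{i,m-w_j}^{(j)},0,\dots,0)$. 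Restricting each to its first $m-w_s$ coordinates yields exactly the first $d_j$ rows of $C_j^{(m-w_s)}$, which are linearly independent by the $(t,s)$-sequence property $\rho_{m-w_s}(C_1^{(m-w_s)},\dots,C_s^{(m-w_s)})\ge m-w_s-t$. Since lifting independence from a coordinate projection is automatic, $\rho_m\ge m-w_s-t$ follows. The estimate $\widetilde{t}\le\min\{m,w_s+t\}$ then comes from the standard relation $\rho_m\ge m-\widetilde{t}$ together with the trivial bound $\widetilde{t}\le m$.

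For the strict refinement \eqref{AKeq:claim_rho_CT_strict} I would split on $w_s$ versus $t$. If $w_s\ge t$, then $\max\{t,w_s\}=w_s$ and the claim reduces to the upper bound already shown. If $w_s<t$, I must exhibit some $(d_1,\dots,d_s)$ with $\sum d_j=m-t+1$ whose corresponding rows in $\akCT_1^{(m)},\dots,\akCT_s^{(m)}$ are linearly dependent, which would give $\rho_m(\akCT_1^{(m)},\dots,\akCT_s^{(m)})\le m-t$. Strictness of $\calP$ supplies a natural candidate $(d_1^\ast,\dots,d_s^\ast)$ with $\sum d_j^\ast=m-t+1$ and a nontrivial relation $\sum\alpha_{ij}\bsc_i^{(j)}=0$ among the original rows; the assumption $w_s<t$ forces $d_j^\ast\le m-t+1\le m-w_j$, so every row involved lies in the preserved block of the corresponding $\akCT_j^{(m)}$.

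The hard part will be transferring this relation to the column-row reduced rows. The zeroing of the last $w_j$ coordinates acts inhomogeneously across $j$, so the original dependence does not pull back verbatim: on each coordinate $r>m-w_s$, the reduced relation reads $\sum_{j:\, w_j\le m-r}\sum_i\alpha_{ij}c_{i,r}^{(j)}=0$ rather than the full sum. My plan for bridging this gap is to exploit the additional freedom available---either by choosing a different tuple $(d_1,\dots,d_s)$ summing to $m-t+1$ that is adapted to the reduction pattern, or by using rank considerations on the last $w_s$ columns of $C_1^{(m)},\dots,C_s^{(m)}$ to refine the coefficients $\alpha_{ij}$ so that the tail contributions vanish coordinatewise. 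Once a valid tuple and coefficients are in hand, the bound $\rho_m\le m-t$ is immediate, and \eqref{AKeq:claim_rho_CT_strict} follows.
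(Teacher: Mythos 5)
Your treatment of the two-sided bound \eqref{AKeq:claim_rho_CT} and of the estimate $\widetilde{t}\le\min\{m,w_s+t\}$ is correct and follows essentially the same route as the paper: the paper also reduces the lower bound to the sequence property $\rho_{m-w_s}(C_1^{(m-w_s)},\dots,C_s^{(m-w_s)})\ge m-w_s-t$ of the upper-left corners and otherwise defers to the proof of Theorem 1 in \cite{AK24}. Your observation that $d_j\le m-w_s-t\le m-w_j$ keeps every selected row inside the preserved block, so that restriction to the first $m-w_s$ coordinates recovers rows of $C_j^{(m-w_s)}$ and independence lifts back, is exactly the point that makes the column reduced argument survive the additional row reduction.

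The strict refinement \eqref{AKeq:claim_rho_CT_strict} in the case $w_s<t$ is a genuine gap, and the obstruction you flag is not a technicality that extra cleverness will remove: the inequality $\rho_m(\akCT_1^{(m)},\dots,\akCT_s^{(m)})\le m-t$ can actually fail, so neither of your two repair plans (choosing a better tuple, or refining the coefficients) can succeed in general. Take $b=2$, $s=2$, $m=3$, $t=2$, $w_1=0$, $w_2=1$, and
\[
C_1^{(3)}=C_2^{(3)}=\begin{pmatrix}1&0&1\\0&1&0\\0&0&1\end{pmatrix},
\qquad\text{so}\qquad
\akCT_1^{(3)}=C_1^{(3)},\quad
\akCT_2^{(3)}=\begin{pmatrix}1&0&0\\0&1&0\\0&0&0\end{pmatrix}.
\]
Since $\bsc_1^{(1)}=\bsc_1^{(2)}=(1,0,1)$, the system for $(d_1,d_2)=(1,1)$ is dependent, so $\rho_3(C_1^{(3)},C_2^{(3)})=1$ and $\calP$ is a strict $(2,3,2)$-net; one can extend these corners to generating matrices of a digital $(2,2)$-sequence over $\F_2$ by letting the first rows differ from the fourth column onward and completing greedily. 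After reduction, however, the pair $(1,0,1)$, $(1,0,0)$ is independent, and one checks that every system with $d_1+d_2=2$ is independent, so $\rho_3(\akCT_1^{(3)},\akCT_2^{(3)})=2>1=m-\max\{t,w_s\}$: zeroing the last column has destroyed the very coincidence that witnessed strictness, and the reduced net is genuinely better than the original. So the difficulty you describe in your last paragraph cannot be bridged; the strict upper bound needs either additional hypotheses or a different argument, and the paper itself offers no help here, since its entire proof of this lemma is a pointer to the proof of \cite[Theorem 1]{AK24}, whose exact hypotheses for the strict part you would need to check against the column-row reduced setting.
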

\begin{proof}
    See  \cite[Proof of Theorem 1]{AK24}.
\end{proof}

\begin{remark} \label{AKrem:col_row_runtime}
    We can modify \cite[Algorithm 1]{AK24} to obtain a slightly faster algorithm for matrix-matrix multiplication for column-row reduced digital nets. In particular, we can obtain a slight speed-up in the computation of the points of the reduced digital nets, where we can resize $\akCT_j^{(m)}$ to $\akCT_j^{(m-w_j)}$. That is, the theoretical runtime of the column-row reduced algorithm is 
\[
\calO \left( \sum_{j=1}^{s^*} b^{m-w_j} (\tau + (m-w_j)^2 ) \right).
\]
In comparison to the column reduced matrix-matrix product algorithm, we see that there is an improvement in the point generation step.
\end{remark}

\begin{remark}
So far, for column-row reduced digital nets, we always set an equal number of rows and columns to zero for each generating matrix $C_j^{(m)}, 1\leq j\leq s $, where the $C_j^{(m)}$ are derived from a digital $(t,s)$-sequence over $\F_b$. Theoretically, we could also consider the scenario where we set a different number of rows and columns to zero. In this case, we can obtain the following generalization of \cite[Theorem 1]{AK24} and \cref{AKth:main_result_row_red}. Let $0= w_1^r \leq \dots \leq w_s^r$ be the reduction indices for the rows and $0= w_1^c \leq \dots \leq w_s^c$ be the reduction indices for the columns of $C_1^{(m)}, \dots, C_s^{(m)}$. Let $\akCT_1^{(m)},\ldots,\akCT_s^{(m)}$ be the corresponding column-row reduced generating matrices, and let $\widetilde{t}$ 
be the minimal quality parameter of the net generated by the $\akCT_j^{(m)}$. Then, 
    \[
    \max\{ 0, m- \max \{ w_s^c + t, w_s^r \} \} \leq \rho_m(\akCT_1,\dots,\akCT_s) \leq \max\{ 0, m- \max \{w_s^c, w_s^r \} \},
    \]
and $\widetilde{t} \leq \min\{m, \max \{ w_s^c + t, w_s^r \} \}$.

\end{remark}

\section{Numerical experiments}

\begin{figure}[t]
	\begin{subfigure}[t]{0.48\textwidth}
		\includegraphics[width=\textwidth]{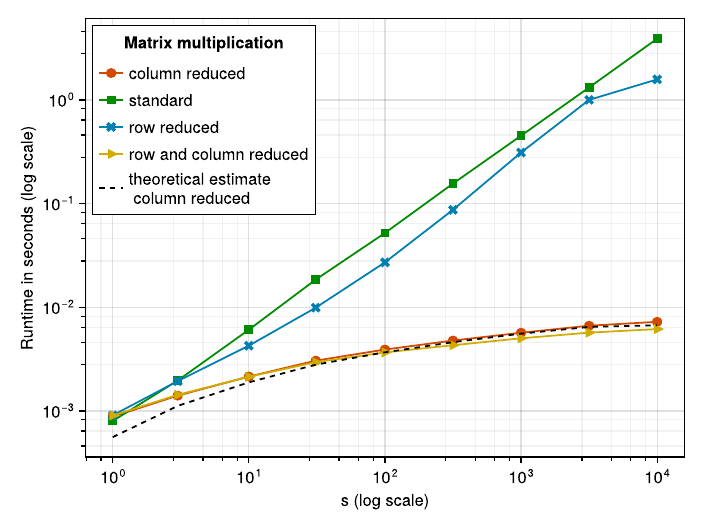}
		\subcaption{$w_j = \min(\lfloor \log_2(j) \rfloor, m)$}
		\label{AKfig:wt_inc_1}
	\end{subfigure}
	\hfill
	\begin{subfigure}[t]{0.48\textwidth}
		\includegraphics[width=\textwidth]{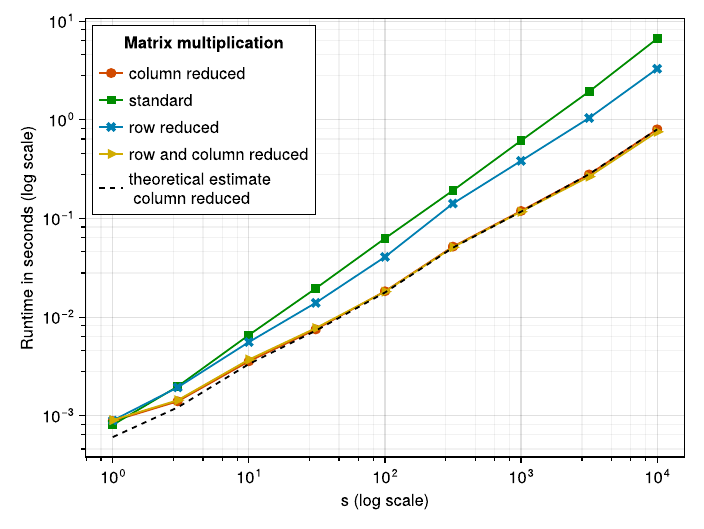}
		\subcaption{$ w_j = \min(\lfloor \log_2(j^{1/2}) \rfloor, m)$}
        \label{AKfig:wt_inc_2}
	\end{subfigure}
\caption{$m=12,\tau = 20$, varying $w_j$} \label{AKfig:weight_comp}
\end{figure}

\begin{figure}[t]
    \centering
		\includegraphics[width=0.48\textwidth]{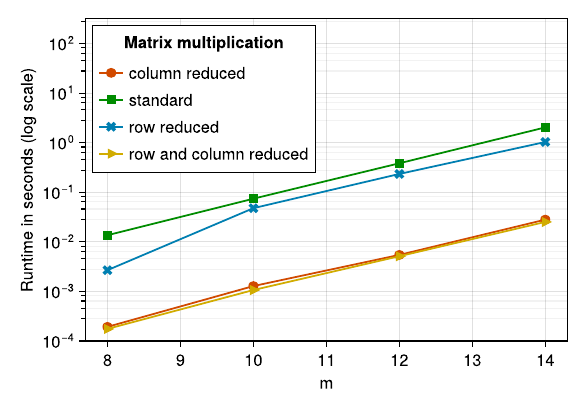}
	\caption{$s=800, \tau = 20$, varying $m$} \label{AKfig:vary_m}
\end{figure}

In \cite[Section 5]{AK24}, we tested the runtime of \cite[Algorithm 1]{AK24} which computes the matrix-matrix product $XA$, where $X$ is generated using column reduced digital nets and $A$ is an arbitrary $s \times \tau$ matrix. We compared this runtime to the runtime of the standard matrix multiplication and the runtime of \cite[Algorithm 4]{DEHKL23}, which uses row reduced digital nets.

In this section, we revisit the plots from \cite[Section 5]{AK24}, where we additionally compare the computational performance of column-row reduced digital nets. In particular, we modify \cite[Algorithm 1]{AK24} as described in \cref{AKrem:col_row_runtime}.

All the above-mentioned algorithms are implemented in the Julia programming language (Version 1.9.3).\footnote{Source code available at \url{https://github.com/Vishnupriya-Anupindi/ReducedDigitalNets.jl}}
We used random binary matrices as the generating matrices $C_1^{(m)},\ldots,C_s^{(m)} \in \mathbb{F}_2^{m \times m}$, since the matrix-matrix product computation does not change if we use random matrices or generating matrices of, e.g., Sobol' or Niederreiter sequences. 

\cref{AKfig:weight_comp} shows the runtime evolution as we increase the dimension $s$ but keep the other parameters fixed at $b = 2, m = 12$, and $\tau = 20$. In \cref{AKfig:wt_inc_1}, the reduction indices are given by $w_j = \min(\lfloor \log_2(j) \rfloor, m)$. Here we see that the runtime of the matrix-matrix product using column-row reduced digital nets (labelled as row and column reduced) and column reduced digital nets are almost the same as expected and both of these perform better than using row reduced digital nets in \cite[Algorithm 4]{DEHKL23} and also the standard multiplication. However, when the reduction indices increase more slowly, for example, $ w_j = \min(\lfloor \log_2(j^{1/2}) \rfloor, m)$ as shown in \cref{AKfig:wt_inc_2}, the difference in the performance between the different algorithms reduces. 

\cref{AKfig:vary_m} compares the runtime evolution as we increase the size of the generating matrices $m$ with fixed values $b = 2, s = 800$, and $\tau = 20$. Also here, the runtime of the matrix-matrix product using column-row reduced (labelled as row and column reduced) and column reduced matrices are almost the same as expected.

More extensive numerical tests are left open for future research.

\section*{Acknowledgements}

The authors acknowledge the support of the Austrian Science Fund (FWF) Project P34808. For open access purposes, the authors have applied a CC BY public copyright license to any author accepted manuscript version arising from this submission.

%
%


\begin{thebibliography}{6}
%

\bibitem{AK24} V.~Anupindi, P.~Kritzer. 
	 Column reduced digital nets. Submitted, 2024. 
	 	\url{https://arxiv.org/abs/2406.10850}

\bibitem{DEHKL23} J.~Dick, A.~Ebert, L.~Herrmann, P. Kritzer,  M.~Longo. The fast reduced QMC matrix-vector product. J. Comput. Appl. Math. 440, 115642, 2023.

\bibitem{DKP22} J.~Dick, P.~Kritzer, F.~Pillichshammer. 
	{\it Lattice Rules}. 
	Springer, Cham, 2022.

\bibitem{DKLS15}
J.~Dick, F.Y.~Kuo, Q.T.~Le Gia, C.~Schwab. Fast QMC matrix-vector multiplication.
SIAM J. Sci. Comput. 37, A1436--A1450, 2015.

\bibitem{DP10} J.~Dick, F.~Pillichshammer. {\it Digital Nets and Sequences. Discrepancy Theory and Quasi-Monte Carlo Integration.}
Cambridge University Press, Cambridge, 2010.

\bibitem{DN97} C.R.~Dietrich, G.N.~Newsam.
Fast and exact simulation of stationary Gaussian processes through circulant embedding of the covariance matrix.
SIAM J. Sci. Comput. 18, 1088--1107, 1997.

\bibitem{FK13} H. Faure, P. Kritzer. New star discrepancy bounds for $(t,m,s)$-nets and $(t,s)$-sequences. Monatsh. Math. 172, 55--75, 2013.

\bibitem{GKNSS11} I.~G. Graham, F.Y.~Kuo, D.~Nuyens, R.~Scheichl, I.H.~Sloan.
Quasi-Monte Carlo methods for elliptic PDEs with random coefficients and applications. 
J. Comput. Phys.  230, 3668--3694, 2011.

\bibitem{N92a} H.~Niederreiter. Low-discrepancy point sets obtained by digital constructions over finite fields, Czechoslovak Math.
J. 42, 143--166, 1992.

\bibitem{N92} H.~Niederreiter.  {\it Random Number Generation and Quasi-Monte Carlo Methods.}
CBMS-NSF Regional Conference Series in Applied Mathematics, 63. Society for Industrial and Applied Mathematics (SIAM), Philadelphia, 1992.

\bibitem{NW15} H.~Niederreiter, A.~Winterhof. \textit{Applied Number Theory}. Springer, Cham, 2015. 

\bibitem{P18} F.~Pillichshammer. Tractability properties of the weighted star 
discrepancy of regular grids. J. Complexity 46, 103--112, 2018.


\bibitem{SW98} I.H.~Sloan, H.~Wo\'zniakowski.
When are quasi-Monte Carlo algorithms efficient for
high-dimensional integrals? J. Complexity 14, 1--33, 1998.

\end{thebibliography}
\end{document}